\newtheorem{thm}{Theorem}[section]
\newtheorem{cor}[thm]{Corollary}
\newtheorem{prop}[thm]{Proposition}
\newtheorem*{thm1}{Theorem}
\newtheorem{claim}{Claim}[thm]
\theoremstyle{definition}
\newtheorem{defn}[thm]{Definition}
\newtheorem{ex}[thm]{Example}
\theoremstyle{remark}
\newtheorem{rem}[thm]{Remark}
\newtheorem{q}[thm]{Question}
\numberwithin{equation}{section}
\newcommand{\norm}[1]{\left\Vert#1\right\Vert}
\newcommand{\abs}[1]{\left\vert#1\right\vert}
\newcommand{\set}[1]{\left\{#1\right\}}
\newcommand{\defined}[1]{\textit{#1}}
\newcommand{\mc}[1]{\mathcal{#1}}
\newcommand{\op}[1]{\operatorname{#1}}
\newcommand{\IL}{\mc{L}_{\omega_1, \omega}}
\newcommand{\Aut}{\op{Aut}}
\newcommand*{\sneg}{\mathord{\sim}}
\begin{document}

\title{Expressive power of infinitary $[0, 1]$-valued logics}%
\author{Christopher J.~Eagle}%
\address{Department of Mathematics and Statistics, University of Victoria, PO BOX 1700 STN CSC, Victoria, B.C., Canada, V8W 2Y2}%
\email{eaglec@uvic.ca}%

\date{\today}%
\begin{abstract}
We consider model-theoretic properties related to the expressive power of three analogues of $\IL$ for metric structures. We give an example showing that one of these infinitary logics is strictly more expressive than the other two, but also show that all three have the same elementary equivalence relation for complete separable metric structures.  We then prove that a continuous function on a complete separable metric structure is automorphism invariant if and only if it is definable in the more expressive logic.  Several of our results are related to the existence of Scott sentences for complete separable metric structures.
\end{abstract}
\maketitle
\section{Introduction}
In the last several years there has been considerable interest in the \emph{continuous first-order logic for metric structures} introduced by Ben Yaacov and Usvyatsov in the mid-2000s and published in \cite{BenYaacov2010}.  This logic is suitable for studying structures based on metric spaces, including a wide variety of structures encountered in analysis.  Continuous first-order logic is a generalization of first-order logic and shares many of its desirable model-theoretic properties, including the compactness theorem.  While earlier logics for considering metric structures, such as Henson's logic of positive bounded formulas (see \cite{Henson2003}), were equivalent to continuous first-order logic, the latter has emerged as the current standard first-order logic for developing the model theory of metric structures.  The reader interested in a detailed history of the interactions between model theory and analysis can consult \cite{Iovino2014}.

In classical discrete logic there are many examples of logics that extend first-order logic, yet are still tame enough to allow a useful model theory to be developed; many of the articles in \cite{Barwise1985} describe such logics.  The most fruitful extension of first-order logic is the infinitary logic $\IL$, which extends the formula creation rules from first-order to also allow countable conjunctions and disjunctions of formulas, subject only to the restriction that the total number of free variables remains finite.  While the compactness theorem fails for $\IL$, it is nevertheless true that many results from first-order model theory can be translated in some form to $\IL$ - see \cite{Keisler1971} for a thorough development of the model theory of $\IL$ for discrete structures.

Many properties considered in analysis have an infinitary character.  It is therefore natural to look for a logic that extends continuous first-order logic by allowing infinitary operations.  In order to be useful, such a logic should still have desirable model-theoretic properties analogous to those of the discrete infinitary logic $\IL$.  There have recently been proposals for such a logic by Ben Yaacov and Iovino \cite{BenYaacov2009}, Sequeira \cite{Sequeira}, and the author \cite{Eagle2014}; we call these logics $\IL^C$, $\IL^C(\rho)$, and $\IL$, respectively.  The superscript $C$ is intended to emphasize the continuity of the first two of these logics, in a sense to be described below.  The goal of Section \ref{sec:Infinitary} is to give an overview of some of the model-theoretic properties of each of these logics, particularly with respect to their expressive powers.  Both $\IL$ and $\IL^C$ extend continuous first-order logic by allowing as formulas some expressions of the form $\sup_n \phi_n$, where the $\phi_n$'s are formulas.  The main difference between $\IL^C$ and $\IL$ is that the former requires infinitary formulas to define \emph{uniformly continuous} functions on all structures, while the latter does not impose any continuity requirements.  Allowing discontinuous formulas provides a significant increase in expressive power, including the ability to express classical negation (Proposition \ref{prop:ExactNegation}), at the cost of a theory which is far less well-behaved with respect to metric completions (Example \ref{Comparison:ex:EEExample}).  The logic $\IL^C(\rho)$ is obtained by adding an additional operator $\rho$ to $\IL^C$, where $\rho(x, \phi)$ is interpreted as the distance from $x$ to the zeroset of $\phi$.  We show in Theorem \ref{thm:SameLogic} that $\rho$ can be defined in $\IL$.

One of the most notable features of the discrete logic $\IL$ (in a countable signature) is that for each countable structure $M$ there is a sentence $\sigma$ of $\IL$ such that a countable structure $N$ satisfies $\sigma$ if and only if $N$ is isomorphic to $M$.  Such sentences are known as \emph{Scott sentences}, having first appeared in a paper of Scott \cite{Scott1965}.  In Section \ref{sec:Scott} we discuss some consequences of the existence of Scott sentences for complete separable metric structures.  The existence of Scott sentences for complete separable metric structures was proved by Sequeira \cite{Sequeira} in $\IL^C(\rho)$ and Ben Yaacov, Nies, and Tsankov \cite{BenYaacov2014} in $\IL^C$.  Despite having shown in Section \ref{sec:Infinitary} that the three logics we are considering have different expressive powers, we use Scott sentences to prove the following in Proposition \ref{prop:SameEERelation}:

\begin{thm1}
Let $M$ and $N$ be separable complete metric structures in the same countable signature.  The following are equivalent:
\begin{itemize}
\item{
$M \cong N$,
}
\item{
$M \equiv N$ in $\IL^C$,
}
\item{
$M \equiv N$ in $\IL^C(\rho)$,
}
\item{
$M \equiv N$ in $\IL$.
}
\end{itemize}
\end{thm1}

Scott's first use of his isomorphism theorem was to prove a definability result, namely that a predicate on a countable discrete structure is automorphism invariant if and only if it is definable by an $\IL$ formula.  The main new result of this note is a metric version of Scott's definability theorem (Theorem \ref{Infinitary:Scott:thmScottDefinability}):

\begin{thm1}
Let $M$ be a separable complete metric structure, and $P : M^n \to [0, 1]$ be a continuous function.  The following are equivalent:
\begin{itemize}
\item{
$P$ is invariant under all automorphisms of $M$,
}
\item{
there is an $\IL$ formula $\phi(\vec{x})$ such that for all $\vec{a} \in M^n$, $P(\vec{a}) = \phi^M(\vec{a})$.
}
\end{itemize}
\end{thm1}

The proof of the above theorem relies heavily on replacing the constant symbols in an $\IL$ sentence by variables to form an $\IL$ formula; Example \ref{ex:Substitute} shows that this technique cannot be used in $\IL^C$ or $\IL^C(\rho)$, so our method does not produce a version of Scott's definability theorem in $\IL^C$ or $\IL^C(\rho)$.

\subsection*{Acknowledgements}
Some of the material in this paper appears in the author's Ph.\,D.~thesis \cite{Eagle2015a}, written at the University of Toronto under the supervision of Jos\'e Iovino and Frank Tall.  We thank both supervisors for their suggestions and insights, both on the work specifically represented here, and on infinitary logic for metric structures more generally.  We also benefited from discussions with Ilijas Farah and Bradd Hart, which led to Theorem \ref{thm:SameLogic} and Example \ref{ex:CStar}.  The final version of this paper was completed during the Focused Research Group ``Topological Methods in Model Theory" at the Banff International Research Station.  We thank BIRS for providing an excellent atmosphere for research and collaboration, and we also thank Xavier Caicedo, Eduardo Du\'e\~{n}ez, Jos\'e Iovino, and Frank Tall for their comments during the Focused Research Group.

\section{Infinitary logics for metric structures}\label{sec:Infinitary}
Our goal is to study infinitary extensions of first-order continuous logic for metric structures.  To begin, we briefly recall the definition of metric structures and the syntax of first-order continuous logic.  The reader interested in an extensive treatment of continuous logic can consult the survey \cite{BenYaacov2008a}.  
\begin{defn}
A \defined{metric structure} is a metric space $(M, d^M)$ of diameter at most $1$, together with:
\begin{itemize}
\item{
A set $(f_i^M)_{i \in I}$ of uniformly continuous \emph{functions} $f_i : M^{n_i} \to M$,
}
\item{
A set $(P_j^M)_{j \in J}$ of uniformly continuous \emph{predicates} $P_j : M^{m_j} \to [0, 1]$,
}
\item{
A set $(c_k^M)_{k \in K}$ of distinguished \emph{elements} of $M$.
}
\end{itemize}
We place no restrictions on the sets $I, J, K$, and frequently abuse notation by using the same symbol for a metric structure and its underlying metric space.
\end{defn}

Metric structures are the semantic objects we will be studying.  On the syntactic side, we have \emph{metric signatures}.  By a \defined{modulus of continuity} for a uniformly continuous function $f : M^n \to M$ we mean a function $\delta : \mathbb{Q} \cap (0, 1) \to \mathbb{Q} \cap (0, 1)$ such that for all $a_1, \ldots, a_n, b_1, \ldots, b_n \in M$ and all $\epsilon \in \mathbb{Q} \cap (0, 1)$,
\[\sup_{1 \leq i \leq n}d(a_i, b_i) < \delta(\epsilon) \implies  d(f(a_i), f(b_i)) \leq \epsilon.\]
Similarly, $\delta$ is a modulus of continuity for $P : M^n \to [0, 1]$ means that for all $a_1, \ldots, a_n, b_1, \ldots, b_n \in M$,
\[\sup_{1 \leq i \leq n}d(a_i, b_i) < \delta(\epsilon) \implies  \abs{P(a_i) - P(b_i)} \leq \epsilon.\]

\begin{defn}
A \defined{metric signature} consists of the following information:
\begin{itemize}
\item{
A set $(f_i)_{i \in I}$ of \emph{function symbols}, each with an associated arity and modulus of uniform continuity,
}
\item{
A set $(P_j)_{j \in J}$ of \emph{predicate symbols}, each with an associated arity and modulus of uniform continuity,
}
\item{
A set $(c_k)_{k \in K}$ of \emph{constant symbols}.
}
\end{itemize}
When no ambiguity can arise, we say ``signature" instead of ``metric signature".
\end{defn}

When $S$ is a metric signature and $M$ is a metric structure, we say that $M$ is an \defined{$S$-structure} if the distinguished functions, predicates, and constants of $M$ match the requirements imposed by $S$.  Given a signature $S$, the \defined{terms} of $S$ are defined recursively, exactly as in the discrete case.

\begin{defn}\label{def:Formula}
Let $S$ be a metric signature.  The \defined{$S$-formulas} of continuous first-order logic are defined recursively as follows.
\begin{enumerate}
\item{
If $t_1$ and $t_2$ are terms, then $d(t_1, t_2)$ is a formula.
}
\item{
If $t_1, \ldots, t_n$ are $S$-terms, and $P$ is an $n$-ary predicate symbol, then $P(t_1, \ldots, t_n)$ is a formula.
}
\item{
If $\phi_1, \ldots, \phi_n$ are formulas, and $f : [0, 1]^n \to [0, 1]$ is continuous, then $f(\phi_1, \ldots, \phi_n)$ is a formula.  We think of each such $f$ as a \emph{connective}.
}
\item{
If $\phi$ is a formula and $x$ is a variable, then $\inf_x \phi$ and $\sup_x \phi$ are formulas.  We think of $\sup_x$ and $\inf_x$ as \emph{quantifiers}.
}
\end{enumerate}
\end{defn}

Given a metric structure $M$, a formula $\phi(\vec{x})$ of the appropriate signature, and a tuple $\vec{a} \in M$, we define the value of $\phi$ in $M$ at $\vec{a}$, denoted $\phi^M(\vec{a})$, in the obvious recursive manner.  We write $M \models \phi(\vec{a})$ to mean $\phi^M(\vec{a}) = 0$.  The basic notions of model theory are then defined in the expected way by analogy to discrete first-order logic.

The only difference between our definitions and those of \cite{BenYaacov2008a} is that in \cite{BenYaacov2008a} it is assumed that the underlying metric space of each structure is complete.  We do not want to make the restriction to complete metric spaces in general, so our definition of structures allows arbitrary metric spaces, and we speak of \emph{complete metric structures} when we want to insist on completeness of the underlying metric.  In first-order continuous logic there is little lost by considering only complete metric structures, since every structure is an elementary substructure of its metric completion.  This is also true in $\IL^C$ and $\IL^C(\rho)$, but not in $\IL$, as Example \ref{Comparison:ex:EEExample} below illustrates.

In continuous logic the connectives $\max$ and $\min$ play the roles of $\wedge$ and $\vee$, respectively, in the sense that for a metric structure $M$, formulas $\phi(\vec{x})$ and $\psi(\vec{x})$, and a tuple $\vec{a}$, we have $M \models \max\{\phi(\vec{a}), \psi(\vec{a})\}$ if and only if $M \models \phi(\vec{a})$ and $M \models \psi(\vec{a})$, and similarly for $\min$ and disjunction.  Consequently, the most direct adaptation of $\IL$ to metric structures is to allow the formation of formulas $\sup_n \phi_n$ and $\inf_n \phi_n$, at least provided that the total number of free variables remains finite (the restriction on the number of free variables is usually assumed even in the discrete case).  However, one of the important features of continuous logic is that it is a \emph{continuous} logic, in the sense that each formula $\phi(x_1, \ldots, x_n)$ defines a continuous function $\phi^M : M^n \to [0, 1]$ on each structure $M$.  The pointwise supremum or infimum of a sequence of continuous functions is not generally continuous.

A second issue arises from the fact that one expects the metric version of $\IL$ to have the same relationship to separable metric structures as $\IL$ has to countable discrete structures.  Separable metric structures are generally not countable, so some care is needed in arguments whose discrete version involves taking a conjunction indexed by elements of a fixed structure.  For instance, one standard proof of Scott's isomorphism theorem is of this kind (see \cite[Theorem 1]{Keisler1971}).  Closely related to the question of whether or not indexing over a dense subset is sufficient is the issue of whether the zeroset of a formula is definable.

With the above issues in mind, we present some of the infinitary logics for metric structures that have appeared in the literature.  The first and third of the following logics were both called ``$\IL$" in the papers where they were introduced, and the second was called ``$\IL(\rho)$"; we add a superscript ``$C$" to the first and second logics to emphasize that they are continuous logics.

\begin{defn}\label{def:InfFormula}
The three infinitary logics for metric structures we will be considering are:
\begin{itemize}
\item{
$\IL^C$ (Ben Yaacov-Iovino \cite{BenYaacov2009}): Allow formulas $\sup_{n < \omega}\phi_n$ and $\inf_{n < \omega}\phi_n$ as long as the total number of free variables remains finite, and the formulas $\phi_n$ satisfy a common modulus of uniform continuity.
}
\item{
$\IL^C(\rho)$ (Sequeira \cite{Sequeira}): Extend $\IL^C$ by adding an operator $\rho(x, \phi)$, interpreted as the distance from $x$ to the zeroset of $\phi$.
}
\item{
$\IL$ (Eagle \cite{Eagle2014}): Allow formulas $\sup_{n < \omega}\phi_n$ and $\inf_{n < \omega}\phi_n$ as long as the total number of free variables remains finite, without regard to continuity.
}
\end{itemize}
\end{defn}

The logic $\IL$ was further developed by Grinstead \cite{Grinstead2014}, who in particular provided an axiomatization and proof system.

Other infinitary logics for metric structures which are not extensions of continuous first-order logic have also been studied.  In a sequence of papers beginning with his thesis \cite{Ortiz1997}, Ortiz develops a logic based on Henson's positive bounded formulas and allows infinitary formulas, but also infinite strings of quantifiers.  An early version of \cite{Coskey2014} had infinitary formulas in a logic where the quantifiers $\sup$ and $\inf$ were replaced by category quantifiers.  

\begin{rem}\label{rem:Formulas}
We will often write formulas in any of the above logics in forms intended to make their meaning more transparent, but sometimes this can make it less obvious that the expressions we use are indeed valid formulas.  For example, in the proof of Theorem \ref{thm:SameLogic} below, we will be given an $\IL$ formula $\phi(\vec{x})$, and we will define
\[\rho_{\phi}(\vec{x}) = \inf_{\vec{y}}\min\left\{\left(d(\vec{x}, \vec{y}) + \sup_{n \in \mathbb{N}} \min\{n\phi(\vec{y}), 1\}\right), 1\right\}.\]
The preceding definition can be seen to be a valid formula of $\IL$ as follows.  For each $n \in \mathbb{N}$ define $u_n : [0, 1] \to [0, 1]$ by $u(z) = \min\{nz, 1\}$.  Define $v : [0, 1]^2 \to [0, 1]$ by $v(z, w) = \min\{z+w, 1\}$.  Then each $u_n$ is continuous, as is $v$, and we have
\[\rho_{\phi}(\vec{x}) = \inf_{\vec{y}}v\left(d(\vec{x}, \vec{y}),\,\,\sup_{n\in\mathbb{N}}u_n(\phi(\vec{x}))\right).\]
A similar process may be used throughout the remainder of the paper to see that expressions we claim are formulas can indeed be expressed in the form of Definitions \ref{def:Formula} and \ref{def:InfFormula}.
\end{rem}

The remainder of this section explores some of the relationships between $\IL^C$, $\IL^C(\rho)$, and $\IL$.  It is clear that each $\IL^C$ formula is both an $\IL$ formula and an $\IL^C(\rho)$ formula.  The next result shows that the $\rho$ operation is implemented by a formula of $\IL$, so each $\IL^C(\rho)$ formula is also equivalent to an $\IL$ formula.

\begin{thm}\label{thm:SameLogic}
For every $\IL$ formula $\phi(\vec{x})$ there is an $\IL$ formula $\rho_{\phi}(\vec{x})$ such that for every metric structure $M$ and every $\vec{a} \in M$,
\[\rho_{\phi}^M(\vec{a}) = \inf\{d(\vec{a}, \vec{b}) : \phi^M(\vec{b}) = 0\}.\]
\end{thm}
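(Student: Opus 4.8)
The plan is to verify directly that the formula
\[\rho_{\phi}(\vec{x}) = \inf_{\vec{y}}\min\left\{\left(d(\vec{x}, \vec{y}) + \sup_{n \in \mathbb{N}} \min\{n\phi(\vec{y}), 1\}\right), 1\right\}\]
already exhibited in Remark \ref{rem:Formulas} computes the desired quantity on every metric structure $M$. The key observation is that the inner expression $g(\vec{y}) := \sup_{n}\min\{n\phi^M(\vec{y}),1\}$ acts as a $\{0,1\}$-valued indicator of the complement of the zeroset of $\phi$: if $\phi^M(\vec{y}) = 0$ then every term $\min\{n\cdot 0,1\}$ is $0$, so $g(\vec{y}) = 0$; while if $\phi^M(\vec{y}) > 0$ then for $n$ large enough $n\phi^M(\vec{y}) \geq 1$, so $g(\vec{y}) = 1$. (Here I use that $\IL$ imposes no continuity constraint, so this discontinuous $g$ is a legitimate subformula — this is exactly the point that fails for $\IL^C$.)

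Given this, I would split the argument into the two inequalities. For the bound $\rho_{\phi}^M(\vec{a}) \leq \inf\{d(\vec{a},\vec{b}) : \phi^M(\vec{b}) = 0\}$: if the zeroset $Z = \{\vec{b} : \phi^M(\vec{b}) = 0\}$ is empty the right-hand side is $+\infty$ (or, since the diameter is at most $1$, we can just note $\rho_\phi^M(\vec a) \le 1$ trivially) and there is nothing to prove; otherwise, for any $\vec{b} \in Z$ we have $g(\vec{b}) = 0$, so the term being minimized at $\vec{y} = \vec{b}$ is $\min\{d(\vec{a},\vec{b}) + 0,\,1\} = \min\{d(\vec{a},\vec{b}),1\} = d(\vec{a},\vec{b})$ (using diameter $\leq 1$), and taking the infimum over $\vec b \in Z$ gives the inequality. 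For the reverse bound $\rho_{\phi}^M(\vec{a}) \geq \inf\{d(\vec{a},\vec{b}) : \phi^M(\vec{b}) = 0\}$: fix any $\vec{y} \in M^n$. If $\phi^M(\vec{y}) = 0$ then $\vec{y} \in Z$ and the term at $\vec{y}$ equals $d(\vec{a},\vec{y}) \geq \inf\{d(\vec{a},\vec{b}):\vec{b}\in Z\}$. If $\phi^M(\vec{y}) > 0$ then $g(\vec{y}) = 1$, so the term at $\vec{y}$ is $\min\{d(\vec{a},\vec{y}) + 1,\,1\} = 1 \geq \inf\{d(\vec a,\vec b):\vec b\in Z\}$ since the latter is at most the diameter $\leq 1$. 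Taking the infimum over $\vec{y}$ yields the claim. Combining the two inequalities gives $\rho_\phi^M(\vec a) = \inf\{d(\vec a,\vec b):\phi^M(\vec b)=0\}$, with the usual convention that $\inf\emptyset$ is to be read as $1$ in the bounded setting (equivalently, one observes both sides equal $1$ when $Z = \emptyset$).

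There is no real obstacle here: the only subtlety worth flagging is the empty-zeroset case and the consistent use of the diameter-$\leq 1$ normalization to identify $\min\{t,1\}$ with $t$ for the relevant distances, together with a remark that the expression is a genuine $\IL$-formula, which was already justified in Remark \ref{rem:Formulas}. I would also note explicitly that this construction makes essential use of the absence of a continuity requirement in $\IL$, since the indicator $g$ is discontinuous precisely when $Z$ is not clopen; this foreshadows the later discussion of why the analogous statement is unavailable in $\IL^C$.
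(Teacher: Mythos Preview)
Your proposal is correct and follows essentially the same approach as the paper: you use the identical formula $\rho_\phi$, make the same observation that $\sup_n\min\{n\phi^M(\vec{y}),1\}$ is the $\{0,1\}$-indicator of the complement of the zeroset, and deduce the result. The paper's version is slightly terser (it collapses your two inequalities into the single line $\rho_\phi^M(\vec{a}) = \inf(\{d(\vec{a},\vec{y}):\phi^M(\vec{y})=0\}\cup\{1\})$), but your more explicit treatment of the two directions and the empty-zeroset case is entirely in the same spirit.
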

\begin{proof}
Let $\phi$ be an $\IL$ formula, and define
\[\rho_{\phi}(\vec{x}) = \inf_{\vec{y}}\min\left\{\left(d(\vec{x}, \vec{y}) + \sup_{n \in \mathbb{N}} \min\{n\phi(\vec{y}), 1\}\right), 1\right\}.\]
(See Remark \ref{rem:Formulas} above for how to express this as an official $\IL$ formula).  Now consider any metric structure $M$, and any $\vec{y} \in M$.  We have
\[\sup_{n \in \mathbb{N}}\min\{n\phi^M(\vec{y}), 1\} = \begin{cases} 0 & \text{ if } \phi^M(\vec{y}) = 0, \\ 1 &\text{ otherwise.}\end{cases}\]
Therefore for any $\vec{a}, \vec{y} \in M$,
\[\min\left\{\left(d(\vec{a}, \vec{y}) + \sup_{n \in \mathbb{N}} \min\{n\phi(\vec{y}), 1\}\right), 1\right\} = \begin{cases}d(\vec{a}, \vec{y}) &\text{ if $\phi^M(\vec{y}) = 0$,} \\ 1 &\text{ otherwise.}\end{cases}\]
Since all values are in $[0, 1]$, it follows that:
\[\rho_{\phi}^M(\vec{a}) = \inf\left(\{d(\vec{a}, \vec{y}) : \phi^M(\vec{y}) = 0\} \cup \{1\}\right) = \inf\{d(\vec{a}, \vec{y}) : \phi^M(\vec{y}) = 0\}.\]
\end{proof}

Each formula of $\IL^C$ or $\IL^C(\rho)$ defines a uniformly continuous function on each structure, and just as in first-order continuous logic, the modulus of continuity of this function depends only on the signature, not the particular structure.  By contrast, the functions defined by $\IL$ formulas need not be continuous at all.  The loss of continuity causes complications for the theory, especially when one is interested in \emph{complete} metric structures, as is often the case in applications.  Of particular note is the fact that, while every metric structure is an $\IL^C$-elementary substructure of its metric completion, this is very far from being true for the logic $\IL$:

\begin{ex}\label{Comparison:ex:EEExample}
Let $S$ be the signature consisting of countably many constant symbols $(q_n)_{n < \omega}$.  Consider the $\IL$ formula
\[\phi(x) = \inf_{n < \omega}\sup_{R \in \mathbb{N}}\min\{1, Rd(x, q_n)\}.\]
For any $a$ in a metric structure $M$ we have $M \models \phi(a)$ if and only if $a = q_n$ for some $n$.  In particular, if $M$ is a countable metric space which is not complete, and $(q_n)_{n < \omega}$ is interpreted as an enumeration of $M$, then
\[M \models \sup_x \phi(x) \qquad \text{and} \qquad \overline{M} \not\models \sup_x\phi(x).\]
In particular, $M \not\equiv_{\IL} \overline{M}$.
\end{ex}

While discontinuous formulas introduce complications, they also give a significant increase in expressive power.  As an example, recall that continuous first-order logic lacks an exact negation connective, in the sense that there is no connective $\neg$ such that $M \models \neg\phi$ if and only if $M \not\models \phi$.  Indeed there is no continuous function $\neg : [0, 1] \to [0, 1]$ such that $\neg(x) = 0$ if and only if $x \neq 0$, so $\IL^C$ also lacks an exact negation connective.  Similarly, the formula $\inf_n\phi_n$ is not the exact disjunction of the formulas $\phi_n$, and $\inf_x$ is not an exact existential quantifier, and neither exact disjunction nor exact existential quantification is present in either continuous infinitary logic.  In $\IL$, we recover all three of these classical operations on formulas.

\begin{prop}\label{prop:ExactNegation}
The logic $\IL$ has an exact countable disjunction, an exact negation, and an exact existential quantifier.
\end{prop}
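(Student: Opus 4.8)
The plan is to exhibit, for each of the three classical operations, an explicit $\IL$ formula built from connectives that are only required to be continuous, together with a bounded application of the $\sup_n$ construction, following exactly the pattern already used in Example~\ref{Comparison:ex:EEExample} and Theorem~\ref{thm:SameLogic}. The key observation driving all three constructions is the ``amplification'' trick: for a formula $\psi$ the expression $\sup_{R \in \mathbb{N}}\min\{1, R\psi\}$ takes the value $0$ when $\psi^M = 0$ and the value $1$ otherwise, so it converts the soft predicate $\psi$ into the exact $\{0,1\}$-valued predicate of its zeroset. Call this formula $\lceil \psi \rceil$. Using $\lceil \cdot \rceil$ one can turn the approximate continuous operations into exact ones.

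\begin{proof}
For an $\IL$ formula $\psi(\vec{x})$ write
\[\lceil \psi \rceil(\vec{x}) = \sup_{R \in \mathbb{N}}\min\{1, R\psi(\vec{x})\},\]
which is a valid $\IL$ formula by the discussion in Remark~\ref{rem:Formulas}. For any metric structure $M$ and $\vec{a} \in M$ we have $\lceil \psi \rceil^M(\vec{a}) = 0$ if $\psi^M(\vec{a}) = 0$ and $\lceil \psi \rceil^M(\vec{a}) = 1$ otherwise.

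\emph{Exact countable disjunction.} Given $\IL$ formulas $(\phi_n)_{n < \omega}$, all with free variables among $\vec{x}$, define
\[\bigvee_n \phi_n \;=\; \inf_{n < \omega} \lceil \phi_n \rceil (\vec{x}).\]
This is an $\IL$ formula. For $\vec{a} \in M$, the value is $0$ if and only if $\lceil \phi_n \rceil^M(\vec{a}) = 0$ for some $n$, i.e.\ if and only if $\phi_n^M(\vec{a}) = 0$ for some $n$; otherwise every term is $1$ and the infimum is $1$. Thus $M \models \bigl(\bigvee_n \phi_n\bigr)(\vec{a})$ if and only if $M \models \phi_n(\vec{a})$ for some $n$.

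\emph{Exact negation.} Given an $\IL$ formula $\phi(\vec{x})$, define
\[\sneg \phi \;=\; 1 - \lceil \phi \rceil(\vec{x}),\]
using the continuous connective $z \mapsto 1 - z$. Then $(\sneg \phi)^M(\vec{a}) = 1 - \lceil \phi \rceil^M(\vec{a})$, which equals $0$ exactly when $\lceil \phi \rceil^M(\vec{a}) = 1$, i.e.\ exactly when $\phi^M(\vec{a}) \neq 0$. Hence $M \models (\sneg \phi)(\vec{a})$ if and only if $M \not\models \phi(\vec{a})$.

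\emph{Exact existential quantifier.} Given an $\IL$ formula $\phi(\vec{x}, y)$, define
\[\exists y\, \phi \;=\; \inf_y \lceil \phi \rceil(\vec{x}, y).\]
For $\vec{a} \in M$, this value is $0$ if and only if there is some $b \in M$ with $\lceil \phi \rceil^M(\vec{a}, b) = 0$, i.e.\ with $\phi^M(\vec{a}, b) = 0$; otherwise $\lceil \phi \rceil^M(\vec{a}, b) = 1$ for all $b$ and the infimum is $1$. Thus $M \models (\exists y\, \phi)(\vec{a})$ if and only if $M \models \phi(\vec{a}, b)$ for some $b \in M$.
\end{proof}

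The only point requiring care is that all these expressions genuinely lie within the formula-formation rules of Definition~\ref{def:InfFormula}: the connective $z \mapsto 1-z$ and the two-place $\min$ are continuous, each $z \mapsto \min\{1, Rz\}$ is continuous, and the total number of free variables never grows, so every formula displayed is a legitimate $\IL$ formula. No obstacle of substance arises; the one thing to flag is that these constructions crucially use the fact that $\IL$, unlike $\IL^C$ or $\IL^C(\rho)$, imposes no continuity requirement on the infinitary formulas $\sup_n$ and $\inf_n$ — indeed $\lceil \psi \rceil$ is exactly the kind of discontinuous formula forbidden in the continuous logics, which is why the proposition genuinely distinguishes $\IL$ from the other two logics.
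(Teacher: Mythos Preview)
Your proof is correct and rests on the same central device as the paper's: the amplification formula $\lceil\psi\rceil = \sup_{R\in\mathbb{N}}\min\{1,R\psi\}$, which collapses values to $\{0,1\}$. Your construction of exact disjunction is identical to the paper's (you merely introduce the notation $\lceil\cdot\rceil$ first).

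For negation and the existential quantifier you take a slightly different, more uniform route. The paper builds negation as $\bigvee_{n}\bigl(\phi \geq \tfrac{1}{n}\bigr)$, using the disjunction just constructed, and then defines $\exists\vec{x}\,\phi$ as $\neg\sup_{\vec{x}}\neg\phi$; you instead apply the amplification directly, setting $\sneg\phi = 1 - \lceil\phi\rceil$ and $\exists y\,\phi = \inf_y\lceil\phi\rceil$. Both routes are short and equally valid; yours has the minor advantage of treating all three operations by the same one-step recipe rather than chaining them, while the paper's route makes the analogy with classical logical equivalences ($\exists \equiv \neg\forall\neg$) explicit. There is no substantive difference in difficulty or generality.
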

\begin{proof}
We first show that $\IL$ has an exact infinitary disjunction.  Suppose that $(\phi_n(\vec{x}))_{n < \omega}$ are formulas of $\IL$.  Define
\[\psi(\vec{x}) = \inf_{n < \omega}\sup_{R \in \mathbb{N}}\min\{1, R\phi_n(\vec{x})\}.\]
Then in any metric structure $M$, for any tuple $\vec{a}$, we have
\[M \models \psi(\vec{a}) \qquad \iff \qquad M \models \phi_n(\vec{a}) \text{ for some $n$}.\]

Using the exact disjunction we define the exact negation. Given any formula $\phi(\vec{x})$, define
\[\neg\phi(\vec{x}) = \bigvee_{n < \omega}\left(\phi(\vec{x}) \geq \frac{1}{n}\right),\]
where $\bigvee$ is the exact disjunction described above.  Then for any metric structure $M$, and any $\vec{a} \in M$,
\begin{align*}
M \models \neg\phi(\vec{a}) &\iff (\exists n < \omega) M \models \phi(\vec{a}) \geq \frac{1}{n} \\
&\iff (\exists n < \omega) \phi^M(\vec{a}) \geq \frac{1}{n} \\
&\iff \phi^M(\vec{a}) \neq 0 \\
&\iff M \not\models \phi(\vec{a})
\end{align*}

Finally, with exact negation and the fact that $M \models \sup_{\vec{x}} \phi(\vec{x})$ if and only if $M \models \phi(\vec{a})$ for every $\vec{a} \in M$, we define $\exists \vec{x} \phi$ to be $\neg \sup_{\vec{x}} \neg \phi$, and have that $M \models \exists \vec{x} \phi(\vec{x})$ if and only if there is $\vec{a} \in M$ such that $M \models \phi(\vec{a})$.
\end{proof}

\begin{rem}
Some caution is necessary when using the negation operation defined in Proposition \ref{prop:ExactNegation}.  Consider the following properties a negation connective $\sneg$ could have for all metric structures $M$, all tuples $\vec{a} \in M$, and all formulas $\phi(\vec{x})$.  These properties mimic properties of negation in classical discrete logic:
\begin{enumerate}
\item{
$M \models \sneg\phi(\vec{a})$ if and only if $M \not\models \phi(\vec{a})$,
}
\item{
$M \models \sneg\sneg\phi(\vec{a})$ if and only if $M \models \phi(\vec{a})$,
}
\item{
$(\sneg\sneg\phi)^M(\vec{a}) = \phi^M(\vec{a})$.
}
\end{enumerate}
Properties (1) and (3) each implies property (2).  In classical $\{0, 1\}$-valued logics there is no distinction between properties (2) and (3), but these properties do not coincide for $[0, 1]$-valued logic.  Property (2) is strictly weaker than property (1), since the identity connective $\sneg \sigma = \sigma$ satisfies (2) but not (1).

The connective $\neg$ defined in the proof of Proposition \ref{prop:ExactNegation} has properties (1) and (2), but does not have property (3), because if $\phi(\vec{a})^M > 0$, then $(\neg\neg\phi)^M(\vec{a}) = 1$.  The approximate negation commonly used in continuous first-order logic, which is defined by $\sneg~\phi(\vec{x})~= 1-\phi(\vec{x})$, satisfies properties (2) and (3), but not property (1).

In fact, there is no truth-functional connective in \emph{any} $[0, 1]$-valued logic that satisfies both (1) and (3).  Suppose that $\sneg$ were such a connective.  Then $\sneg~:~[0, 1]~\to~[0, 1]$ would have the following two properties for all $x \in [0, 1]$, as consequences of (1) and (3), respectively:
\begin{itemize}
\item{$\sneg(x) = 0$ if and only if $x \neq 0$,}
\item{$\sneg(\sneg(x)) = x$.}
\end{itemize}
The first condition implies that $\sneg$ is not injective, and hence cannot satisfy the second condition.
\end{rem}

The expressive power of $\IL$ is sufficient to introduce a wide variety of connectives beyond those of continuous first-order logic and the specific ones described in Proposition \ref{prop:ExactNegation}.

\begin{prop}\label{prop:Borel}
Let $u : [0, 1]^n \to [0, 1]$ be a Borel function, with $n < \omega$, and let $(\phi_l(\vec{x}))_{l < n}$ be $\IL$-formulas.  There is an $\IL$-formula $\psi(\vec{x})$ such that for any metric structure $M$ and any $\vec{a} \in M$,
\[\psi^M(\vec{a}) = u(\phi_1^M(\vec{a}), \ldots).\]
\end{prop}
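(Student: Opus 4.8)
The plan is to build $\psi$ by approximating the Borel function $u$ from above and below by simple functions, using the exact connectives from Proposition~\ref{prop:ExactNegation} together with the fact (Theorem~\ref{thm:SameLogic}) that the zeroset of an $\IL$ formula is already accessible via $\rho$. First I would reduce to the case $n=1$; the general case is identical once one keeps track of tuples of arguments, and writing it for a single formula $\phi(\vec{x})$ keeps the notation manageable. So suppose $u:[0,1]\to[0,1]$ is Borel and $\phi(\vec{x})$ is an $\IL$ formula; I want $\psi$ with $\psi^M(\vec{a}) = u(\phi^M(\vec{a}))$ for all $M$ and all $\vec{a}$.

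The key observation is that $\IL$ can express the indicator function of the preimage of any set obtained from the intervals by the Borel operations. For an interval with rational endpoints, say $(p,q)$, the formula $\sup_R\min\{1,R\max\{p-\phi(\vec{x}),\phi(\vec{x})-q,0\}\}$ — or more simply a combination of the ``$\phi(\vec{x})\geq r$'' formulas used in the proof of Proposition~\ref{prop:ExactNegation} — is an $\IL$ formula taking value $0$ exactly when $\phi^M(\vec{a})\in(p,q)$ and value $1$ otherwise; call such a formula $\chi_B(\vec{x})$ for a Borel set $B$. The collection of $B\subseteq[0,1]$ for which such a $\chi_B$ exists is closed under complementation (apply the exact negation $\neg$) and under countable intersection (take the exact countable disjunction $\bigvee_n \chi_{B_n}$, which is $0$ exactly when some $\chi_{B_n}$ is $0$, i.e.\ when $\phi^M(\vec{a})\notin\bigcap_n B_n$, matching $\chi_{\bigcap B_n}$), hence it is a $\sigma$-algebra containing the rational-endpoint intervals, so it is all of the Borel $\sigma$-algebra. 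Now, given $u$, for each $k$ partition $[0,1]$ into the Borel sets $B_{k,j}=u^{-1}\bigl([\tfrac{j}{2^k},\tfrac{j+1}{2^k})\bigr)$ (with the top interval closed), let $\psi_k(\vec{x}) = \inf_{0\le j<2^k}\max\bigl\{\chi_{B_{k,j}}(\vec{x}),\ \tfrac{j}{2^k}\bigr\}$, so that $\psi_k^M(\vec{a}) = j/2^k$ when $u(\phi^M(\vec{a}))\in[j/2^k,(j+1)/2^k)$; thus $\psi_k^M(\vec{a})$ converges uniformly to $u(\phi^M(\vec{a}))$, from below, with $|\psi_k^M(\vec{a})-u(\phi^M(\vec{a}))|<2^{-k}$. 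Finally I would define $\psi(\vec{x}) = \sup_{k}\psi_k(\vec{x})$, which is a legitimate $\IL$ formula (finitely many free variables), and observe that the supremum of a sequence converging uniformly from below to $u(\phi^M(\vec{a}))$ is exactly that limit, so $\psi^M(\vec{a}) = u(\phi^M(\vec{a}))$ as desired.

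The one point needing care — and the main obstacle — is verifying that the $\sigma$-algebra of ``$\IL$-expressible'' Borel sets is genuinely closed under the relevant operations \emph{uniformly over all structures and all tuples}, since the value $\phi^M(\vec{a})$ ranges over all of $[0,1]$ as $M$ and $\vec{a}$ vary and nothing is assumed continuous. Concretely I must check that $\bigvee_n\chi_{B_n}$ really realizes $\chi_{\bigcap_n B_n}$ for \emph{every} possible value of $\phi^M(\vec{a})$, and that the exact negation $\neg\chi_B$ realizes $\chi_{[0,1]\setminus B}$ pointwise; both follow from the exactness clauses in Proposition~\ref{prop:ExactNegation}, but it is worth stating the closure properties as a small internal claim and proving it by the usual $\sigma$-algebra argument. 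Everything else — the dyadic approximation, the uniform convergence, and the passage to the supremum — is routine, and the reduction from general $n$ back to the one-variable case is purely notational, since $\chi_B$ only ever needs the single composite value $u(\phi_1^M(\vec{a}),\dots)$, which one handles by building the $\chi_B$ directly for Borel $B\subseteq[0,1]^n$ out of the coordinate formulas $\phi_1,\dots,\phi_n$ by the same $\sigma$-algebra argument applied in $[0,1]^n$.
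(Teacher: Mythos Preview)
Your approach is correct in outline and genuinely different from the paper's. You first show that the Borel sets $B\subseteq[0,1]^n$ whose indicator $\chi_B$ (value $0$ on $B$, value $1$ off $B$) is $\IL$-expressible in terms of $\phi_1,\ldots,\phi_n$ form a $\sigma$-algebra containing the rational intervals, and then approximate $u$ from below by dyadic simple functions and pass to the supremum. The paper instead invokes the Lebesgue--Hausdorff theorem identifying Borel functions on $[0,1]^n$ with the functions in the Baire hierarchy, and argues by transfinite induction on Baire class: continuous functions are already first-order connectives, and if $u=\lim_k u_k$ pointwise with each $u_k$ of lower class, then by induction there are $\psi_k$ expressing $u_k(\phi_1,\ldots,\phi_n)$, and one sets $\psi=\inf_k\sup_{m\ge k}\psi_m$. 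The paper's route is shorter and never needs the exact connectives of Proposition~\ref{prop:ExactNegation}; your route is more elementary in that it avoids Lebesgue--Hausdorff and works directly from the $\sigma$-algebra definition of Borel.

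One slip to fix: your ``closure under countable intersection'' step is misstated. The exact disjunction $\bigvee_n\chi_{B_n}$ is $0$ exactly when some $\chi_{B_n}$ is $0$, i.e.\ when $\phi^M(\vec a)\in\bigcup_n B_n$, not when $\phi^M(\vec a)\notin\bigcap_n B_n$; so what your argument actually establishes is closure under countable \emph{union}. Combined with complementation this still gives a $\sigma$-algebra, so the conclusion stands. (A smaller point: your displayed formula for an interval produces $\chi_{[p,q]}$ rather than $\chi_{(p,q)}$, but closed rational intervals generate the Borel $\sigma$-algebra equally well.)
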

\begin{proof}
Recall that the \emph{Baire hierarchy} of functions $f : [0, 1]^n \to [0, 1]$ is defined recursively, with $f$ being \emph{Baire class $0$} if it is continuous, and \emph{Baire class $\alpha$} (for an ordinal $\alpha > 0$) if it is the pointwise limit of a sequence of functions each from some Baire class $< \alpha$.  The classical Lebesgue-Hausdorff theorem (see \cite[Proposition 3.1.32 and Theorem 3.1.36]{Srivastava1998}) implies that a function $f : [0, 1]^\omega \to [0, 1]$ is Borel if and only if it is Baire class $\alpha$ for some $\alpha < \omega_1$.  Our proof will therefore be by induction on the Baire class $\alpha$ of our connective $u : [0, 1]^n \to [0, 1]$.  The base case is $\alpha = 0$, in which case $u$ is continuous, and hence is a connective of first-order continuous logic.

Now suppose that $u = \lim_{k\to\infty}u_k$ pointwise, with each $u_k$ of a Baire class $\alpha_k < \alpha$.  By induction, for each $k$ let $\psi_k(\vec{x})$ be such that for every metric structure $M$ and every $\vec{a} \in M$ $\psi_k^M(\vec{a}) = u_k(\phi_1^M(\vec{a}), \ldots, \phi_n^M(\vec{a}))$.  Then we have
\begin{align*}
u(\phi_1^M(\vec{a}), \ldots, \phi_n^M(\vec{a})) &= \lim_{k\to\infty}u_k(\phi_1^M(\vec{a}), \ldots, \phi_n^M(\vec{a})) \\
&= \limsup_{k\to\infty}\psi_k^M(\vec{a}) \\
&= \inf_{k\geq 0}\sup_{m\geq k}\psi_m^M(\vec{a})
\end{align*}
The final expression shows that the required $\IL$ formula is $\inf_{k \geq 0}\sup_{m \geq k}\psi_m(\vec{x})$.
\end{proof}

\begin{rem}
The case of Proposition \ref{prop:Borel} for sentences appears, with a different proof, in \cite[Theorem 1.25]{Grinstead2014}.

The expressive power of continuous first-order logic is essentially unchanged if continuous functions of the form $u : [0, 1]^\omega \to [0, 1]$ are allowed as connectives in addition to the continuous functions on finite powers of $[0, 1]$ (see \cite[Proposition 9.3]{BenYaacov2008a}).  If such infinitary continuous connectives are permitted in $\IL$, then the same proof as above also shows that $\IL$ implements all Borel functions $u : [0, 1]^\omega \to [0, 1]$.
\end{rem}

In order to obtain the benefits of both $\IL$ and $\IL^C$ or $\IL^C(\rho)$, it is sometimes helpful to work in $\IL$ and then specialize to a more restricted logic when continuity becomes relevant.  A \defined{fragment} of an infinitary metric logic $\mc{L}$ is a set of $\mc{L}$-formulas including the formulas of continuous first-order logic, closed under the connectives and quantifiers of continuous first-order logic, closed under subformulas, and closed under substituting terms for variables.  In \cite{Eagle2014} we defined a fragment $L$ of $\IL$ to be \defined{continuous} if it has the property that every $L$-formula defines a continuous function on all structures.  The definition of a continuous fragment ensures that if $L$ is a continuous fragment and $M$ is a metric structure, then $M \preceq_L \overline{M}$.

It follows immediately from the definitions that $\IL^C$ is a continuous fragment of both $\IL$ and $\IL^C(\rho)$.  The construction of the $\rho$ operation as a formula of $\IL$ in Theorem \ref{thm:SameLogic} uses discontinuous formulas as subformulas, so $\IL^C(\rho)$ is not a continuous fragment of $\IL$, although it would be if we viewed the formula $\rho_\phi$ from Theorem \ref{thm:SameLogic} as having only $\phi$ as a subformula.  While it is a priori possible that there are continuous fragments of $\IL$ that are not subfragments of $\IL^C(\rho)$, we are not aware of any examples.  It also remains unclear whether or not the $\rho$ operation of $\IL^C(\rho)$ can be implemented by an $\IL^C$ formula.  We therefore ask:

\begin{q}\label{q:SameLogic}
Suppose that $\phi(\vec{x})$ is an $\IL$ formula such that for every subformula $\psi$ of $\phi$, $\psi^M : M^n \to [0, 1]$ is \emph{uniformly} continuous, with the modulus of uniform continuity not depending on $M$.  Is $\phi$ equivalent to an $\IL^C$ formula?  Is $\rho(\vec{y}, \phi)$ equivalent to an $\IL^C$ formula?
\end{q}

A positive answer to the first part of Question \ref{q:SameLogic} would imply that every continuous fragment of $\IL$ is a fragment of $\IL^C$.  In the first part of the question, the answer is negative if we only ask for $\phi$ to define a uniformly continuous function.  For example, consider the sentence $\sigma = \sup_x\phi(x)$ from Example \ref{Comparison:ex:EEExample}.  For any $M$ we have $\sigma^M : M^0 \to [0, 1]$ is constant, yet we saw that this $\sigma$ can be a witness to $M \not\equiv_{\IL} \overline{M}$, and hence is not equivalent to any $\IL^C$ sentence.  This example can be easily modified to produce examples of $\IL$ formulas with free variables that are uniformly continuous but not equivalent to any $\IL^C$ sentence (for example, $\max\{\sigma, d(y, y)\}$).

\section{Consequences of Scott's Isomorphism Theorem}\label{sec:Scott}
The existence of Scott sentences for complete separable metric structures was first proved by Sequeira \cite{Sequeira} in $\IL^C(\rho)$.  Sequeira's proof of the existence of Scott sentences is a back-and-forth argument, generalizing the standard proof in the discrete setting.  An alternative proof of the existence of Scott sentences in $\IL^C$ goes by first proving a metric version of the L\'opez-Escobar Theorem, which characterizes the isomorphism-invariant bounded Borel functions on a space of codes for structures as exactly those functions of the form $M \mapsto \sigma^M$ for an $\IL^C$-sentence $\sigma$.  Using this method Scott sentences in $\IL^C$ were found by Coskey and Lupini \cite{Coskey2014} for structures whose underlying metric space is the Urysohn sphere, and such that all of the distinguished functions and predicates share a common modulus of uniform continuity.  Shortly thereafter, Ben Yaacov, Nies, and Tsankov obtained the same result for all complete metric structures. 

\begin{thm}[{\cite[Corollary 2.2]{BenYaacov2014}}]\label{thm:ScottIsomorphism}
For each separable complete metric structure $M$ in a countable signature there is an $\IL^C$ sentence $\sigma$ such that for every other separable complete metric structure $N$ of the same signature,
\[\sigma^N = \begin{cases}0 &\text{ if $M \cong N$} \\ 1 &\text{ otherwise}\end{cases}\]
\end{thm}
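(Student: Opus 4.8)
The plan is to carry out a metric version of Scott's analysis, organised around a fixed countable dense subset of $M$ and arranged so that every infinitary formula that appears satisfies a single modulus of uniform continuity, hence is a genuine $\IL^C$-formula. First I would fix a countable dense $D\subseteq M$ and, by transfinite recursion on $\alpha$, attach to each pair of $S$-structures $(M',N)$, each $n<\omega$, and each ordinal $\alpha$ a function $\rho_\alpha^{M',N}\colon (M')^n\times N^n\to[0,1]$: $\rho_0$ records the discrepancy between the atomic data of the two tuples; $\rho_{\alpha+1}(\vec a,\vec b)=\max\{\rho_\alpha(\vec a,\vec b),\ \sup_c\inf_e\rho_\alpha(\vec a c,\vec b e),\ \sup_e\inf_c\rho_\alpha(\vec a c,\vec b e)\}$; and $\rho_\lambda=\sup_{\alpha<\lambda}\rho_\alpha$ at limit stages. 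This sequence is nondecreasing in $\alpha$, and I define the \emph{Scott rank} $\gamma$ of $M$ to be the least ordinal with $\rho_\gamma^{M,M}=\rho_{\gamma+1}^{M,M}$ on every $M^n$.

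Two facts drive the argument. (i) Each $\rho_\alpha^{M',N}$ is uniformly continuous in all $2n$ of its arguments, with a modulus depending only on the signature $S$ and uniformly in $\alpha,M',N,n$: the base case combines the signature's infinitely many atomic moduli into one uniformly continuous expression --- for instance via the continuous $\omega$-ary connective $(t_k)_k\mapsto\sup_k\min\{t_k,2^{-k}\}$, which dampens all but finitely many coordinates --- and the recursion steps introduce only quantifiers, suprema and infima over $D$, and the connectives $\max$ and $\dotminus$, none of which spoils a common modulus; a byproduct is that the suprema and infima over $D$ agree with those over all of $M'$. (ii) $\gamma<\omega_1$: for each rational $q\in(0,1)$ and each $n$ the sets $\{(\vec a,\vec b)\in D^n\times D^n:\rho_\alpha^{M,M}(\vec a,\vec b)\le q\}$ form a nonincreasing chain of subsets of a countable set, hence are constant past some $\beta_{q,n}<\omega_1$; taking $\gamma=\sup_{q,n}\beta_{q,n}$, which is still countable, $\rho_\gamma^{M,M}$ and $\rho_{\gamma+1}^{M,M}$ have the same rational upper bounds at each point of the countable dense set $D^n\times D^n$, hence coincide there, hence everywhere by (i).

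Next I would mirror the recursion for $\rho_\alpha$ to build $\IL^C$-formulas $\sigma^\alpha_{\vec a}(\vec x)$, for $\vec a\in D^{<\omega}$ and $\alpha\le\gamma+1$, with $(\sigma^\alpha_{\vec a})^N(\vec b)=\rho_\alpha^{M,N}(\vec a,\vec b)$ in every $N$ --- the stage $\alpha=0$ compiling in the real constants $\psi^M(\vec a)$ for atomic $\psi$, the successor stage being $\max\{\sigma^\alpha_{\vec a}(\vec x),\ \sup_{c\in D}\inf_y\sigma^\alpha_{\vec a c}(\vec x y),\ \sup_y\inf_{c\in D}\sigma^\alpha_{\vec a c}(\vec x y)\}$, and limit stages countable suprema; fact (i) is exactly what certifies that these expressions are legitimate $\IL^C$-formulas. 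I would then set
\[\tau=\max\Big(\sigma^\gamma_{()},\ \sup_{n<\omega}\ \sup_{\vec a\in D^n}\ \sup_{\vec x}\bigl(\sigma^{\gamma+1}_{\vec a}(\vec x)\dotminus\sigma^\gamma_{\vec a}(\vec x)\bigr)\Big),\]
which is an $\IL^C$-sentence, and finally $\sigma=\sup_{R\in\mathbb N}\min\{1,R\tau\}$; this last step stays inside $\IL^C$ precisely because all of its constituents are \emph{sentences}, hence trivially uniformly continuous, so it does not incur the discontinuity that restricted the analogous construction of Proposition~\ref{prop:ExactNegation} and Example~\ref{Comparison:ex:EEExample} to $\IL$. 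Correctness then splits in two: if $M\cong N$ an isomorphism identifies $\rho_\alpha^{M,N}$ with $\rho_\alpha^{M,M}$, which is stationary from $\gamma$ on, so $\tau^N=0$ and $\sigma^N=0$; and if $\tau^N=0$ then $(\sigma^\gamma_{()})^N=0$ and $\sigma^{\gamma+1}_{\vec a}=\sigma^\gamma_{\vec a}$ in $N$ for all $\vec a$, whence a back-and-forth between $D$ and $N$ --- keeping finite partial assignments $\vec a\mapsto\vec b$ with $(\sigma^\gamma_{\vec a})^N(\vec b)=0$, the extension steps supplied by the vanishing of the $\dotminus$-terms and the completeness of $N$ used to realise infima over $N$ as limits --- yields an isometric, function- and predicate-preserving map $D\to N$ with dense image, i.e.\ an isomorphism $M=\overline D\cong N$, so $\sigma^N=1$.

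The hard part, I expect, will be fact (i): keeping a single modulus of uniform continuity through the atomic stage --- which must absorb the signature's infinitely many, possibly worsening, moduli --- and through every later stage, so that $\tau$ really is an $\IL^C$-sentence and not merely an $\IL$-sentence. A secondary delicacy is that $\tau^N=0$ delivers exact equalities, not mere smallness, and one must verify these are strong enough to assemble an \emph{honest} isomorphism onto the separable complete structure $N$; here the completeness and separability of both $M$ and $N$ are essential. A shorter route avoids the Scott analysis: prove a metric L\'opez--Escobar theorem identifying the isomorphism-invariant Borel functions on a Polish space of codes for separable $S$-structures with the $\IL^C$-definable ones, then note that the isomorphism class of $M$ is a Borel invariant set; I would nonetheless regard the argument sketched above as the more self-contained of the two.
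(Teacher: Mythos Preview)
The paper does not supply a proof of this theorem: it is quoted from \cite{BenYaacov2014}, and the surrounding discussion explains that the argument there goes via a metric L\'opez--Escobar theorem rather than by a direct Scott analysis. The back-and-forth route you propose is the one the paper attributes to Sequeira, but in the stronger logic $\IL^C(\rho)$; the remark immediately after the theorem statement notes that carrying Sequeira's argument through in $\IL^C$ is tied to the open Question~\ref{q:SameLogic}. So you are not reconstructing the paper's (cited) proof but attempting something the paper treats as not yet settled by back-and-forth methods. You yourself mention the L\'opez--Escobar route at the end, and that is in fact the one the paper is invoking.

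Your handling of the uniform-continuity bookkeeping is sound --- the dampening connective $\sup_k\min\{t_k,2^{-k}\}$ does give a common modulus at the atomic stage, and the later stages preserve it --- so $\tau$ really is an $\IL^C$-sentence. The gap is in the back-and-forth itself. From $\tau^N=0$ you correctly obtain $\inf_e(\sigma^\gamma_{\vec ac})^N(\vec be)=0$, but to extend the partial assignment you need an \emph{actual} $e\in N$ with $(\sigma^\gamma_{\vec ac})^N(\vec be)=0$, and on a complete but non-compact space a nonnegative uniformly continuous function can have infimum $0$ without attaining it; your phrase ``completeness of $N$ used to realise infima as limits'' does not bridge this, since a minimising sequence need not be Cauchy. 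This is precisely the obstacle Sequeira's $\rho$ operator is designed to remove (it replaces values by distances to zerosets, which do behave well in back-and-forth), and it is why the paper links a back-and-forth proof in $\IL^C$ to whether $\rho$ is $\IL^C$-expressible. A repair is an approximate back-and-forth in which one maintains only $(\sigma^\gamma_{\vec a_n})^N(\vec b_n)<\epsilon_n$ and revises earlier coordinates along Cauchy sequences, extracting the isomorphism in the limit; this can be made to work but requires substantially more bookkeeping than your sketch indicates, and in any case it is not the argument the paper cites.
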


We note that a positive answer to Question \ref{q:SameLogic} would imply that Sequeira's proof works in $\IL^C$, and hence would give a more standard back-and-forth proof of Theorem \ref{thm:ScottIsomorphism}.  

\begin{rem}
Even with the increased expressive power of $\IL$ over $\IL^C$, we cannot hope to prove the existence of Scott sentences for arbitrary (i.e., possibly incomplete) separable metric structures, because there are $2^{2^{\aleph_0}}$ pairwise non-isometric separable metric spaces (\cite[Theorem 2.1]{KellyNordhaus1951}), but only $2^{\aleph_0}$ sentences of $\IL$ in the empty signature.  
\end{rem}

We can easily reformulate Theorem \ref{thm:ScottIsomorphism} to apply to incomplete structures, but little is gained, as we only get uniqueness at the level of the metric completion.

\begin{cor}
For each separable metric structure $M$ in a countable signature there is an $\IL^C$ sentence $\sigma$ such that for every other separable metric structure $N$ of the same signature, 
\[\sigma^N = \begin{cases}0 &\text{ if $\overline{M} \cong \overline{N}$} \\ 1 &\text{ otherwise}\end{cases}\]
\end{cor}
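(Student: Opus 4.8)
The plan is to deduce this corollary from Theorem \ref{thm:ScottIsomorphism} by passing to metric completions. The key observation is that for any separable metric structure $M$, the completion $\overline{M}$ is a separable complete metric structure in the same countable signature: the distinguished functions and predicates extend uniquely to $\overline{M}$ by uniform continuity (with the same moduli of continuity, so $\overline{M}$ is a legitimate $S$-structure), and separability is preserved under completion. Thus Theorem \ref{thm:ScottIsomorphism} applies to $\overline{M}$ directly.

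First I would fix a separable metric structure $M$, form $\overline{M}$, and apply Theorem \ref{thm:ScottIsomorphism} to $\overline{M}$ to obtain an $\IL^C$ sentence $\sigma$ with $\sigma^{\overline{N'}} = 0$ if $\overline{M} \cong N'$ and $\sigma^{\overline{N'}} = 1$ otherwise, for every separable complete metric structure $N'$ in the signature. Next, given an arbitrary separable metric structure $N$ of the same signature, I would invoke the fact recalled in the excerpt that every metric structure is an $\IL^C$-elementary substructure of its metric completion (equivalently, that $\IL^C$ is a continuous fragment, so $M \preceq_{\IL^C} \overline{M}$ and likewise $N \preceq_{\IL^C} \overline{N}$). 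In particular $\sigma^N = \sigma^{\overline{N}}$ since $\sigma$ is a sentence. Applying the defining property of $\sigma$ with $N' = \overline{N}$ (which is separable and complete), we get $\sigma^{\overline{N}} = 0$ iff $\overline{M} \cong \overline{N}$, and $\sigma^{\overline{N}} = 1$ otherwise. Combining, $\sigma^N = 0$ iff $\overline{M} \cong \overline{N}$ and $\sigma^N = 1$ otherwise, which is exactly the claim.

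There is no serious obstacle here; the corollary is essentially a bookkeeping consequence of Theorem \ref{thm:ScottIsomorphism} together with the elementarity of a structure in its completion for the continuous fragment $\IL^C$. The only points requiring a word of care are that the completion of a separable $S$-structure is again a separable $S$-structure (so that Theorem \ref{thm:ScottIsomorphism} is applicable to it) and that $\sigma^N = \sigma^{\overline{N}}$, which is immediate because $\sigma$ has no free variables and $N \preceq_{\IL^C} \overline{N}$. I would state the argument in two or three sentences rather than belabor it.
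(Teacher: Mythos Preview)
Your proposal is correct and follows essentially the same approach as the paper: take $\sigma$ to be the Scott sentence of $\overline{M}$ from Theorem \ref{thm:ScottIsomorphism}, and use that $\sigma^N = \sigma^{\overline{N}}$ since $\sigma$ is an $\IL^C$ sentence. The paper's proof is just the terse two-line version of what you wrote; your additional remarks about why $\overline{M}$ is a legitimate separable $S$-structure are accurate but omitted in the paper as routine.
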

\begin{proof}
Let $\sigma$ be the Scott sentence for $\overline{M}$, as in Theorem \ref{thm:ScottIsomorphism}.  Since $\sigma$ is in $\IL^C$, we have
\[\sigma^N = \sigma^{\overline{N}} = \begin{cases}0 &\text{ if $\overline{M} \cong \overline{N}$} \\ 1 &\text{ otherwise}\end{cases}.\]
\end{proof}

The following observation should be compared with Example \ref{Comparison:ex:EEExample} and Proposition \ref{prop:ExactNegation}, which showed that there are $\IL$ formulas (and even sentences) that are not $\IL^C$ or $\IL^C(\rho)$ formulas.

\begin{prop}\label{prop:SameEERelation}
For any separable complete metric structures $M$ and $N$ in the same countable signature, the following are equivalent:
\begin{enumerate}
\item{
$M \cong N$,
}
\item{
$M \equiv_{\IL} N$,
}
\item{
$M \equiv_{\IL^C(\rho)} N$,
}
\item{
$M \equiv_{\IL^C} N$.
}
\end{enumerate}
\end{prop}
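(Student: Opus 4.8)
The plan is to prove the chain of implications $(1) \Rightarrow (2) \Rightarrow (3) \Rightarrow (4) \Rightarrow (1)$, where the only nontrivial step is the last one. The implication $(1) \Rightarrow (2)$ is immediate: an isomorphism of metric structures preserves the value of every formula, and this is verified by a routine induction on the complexity of $\IL$ formulas (the infinitary $\sup$ and $\inf$ clauses cause no trouble since suprema and infima are preserved under bijections). The implications $(2) \Rightarrow (3)$ and $(3) \Rightarrow (4)$ follow from the inclusions of logics established in Section \ref{sec:Infinitary}: every $\IL^C(\rho)$ formula is equivalent to an $\IL$ formula by Theorem \ref{thm:SameLogic} (together with the observation that $\IL^C$ formulas are $\IL^C(\rho)$ formulas and that the $\rho$ operator is the only additional ingredient), and every $\IL^C$ formula is literally an $\IL^C(\rho)$ formula; hence agreement on the larger logic forces agreement on the smaller one. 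In particular, if $M \equiv_{\IL} N$ then $M$ and $N$ agree on all $\IL^C(\rho)$-sentences, and if $M \equiv_{\IL^C(\rho)} N$ then they agree on all $\IL^C$-sentences.

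The crux is $(4) \Rightarrow (1)$, and here I would simply invoke the existence of Scott sentences in $\IL^C$, i.e.\ Theorem \ref{thm:ScottIsomorphism}. Given separable complete metric structures $M$ and $N$ in the same countable signature with $M \equiv_{\IL^C} N$, let $\sigma$ be the $\IL^C$ Scott sentence for $M$ provided by Theorem \ref{thm:ScottIsomorphism}, so that $\sigma^M = 0$ and $\sigma^{M'} = 1$ for every separable complete $M' \not\cong M$ in the signature. Since $\sigma^M = 0$ and $M \equiv_{\IL^C} N$, we get $\sigma^N = 0$, which by the defining property of $\sigma$ forces $N \cong M$. This closes the cycle.

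The main obstacle is purely one of bookkeeping rather than mathematics: one must be careful that ``$M \equiv_{\mc{L}} N$'' is being used consistently to mean that $\sigma^M = \sigma^N$ for every $\mc{L}$-sentence $\sigma$ (equivalently, $M \models \sigma \iff N \models \sigma$ is \emph{not} quite enough in the $[0,1]$-valued setting, since two structures could satisfy the same sentences in the sense of taking value $0$ while disagreeing on intermediate values). However, for the argument above only the weaker ``same value $0$'' version is needed: in $(4) \Rightarrow (1)$ we only use that $\sigma^M = 0$ implies $\sigma^N = 0$, and the direction $(1) \Rightarrow (2)$ gives the full-strength equality of values. So I would state at the outset which notion of elementary equivalence is meant (the full equality of sentence-values, which is what makes the four conditions genuinely equivalent), note that $(1)\Rightarrow(2)$ delivers it, and observe that the reverse implications only require the weaker consequence. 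One should also remark that Theorem \ref{thm:ScottIsomorphism} requires the countable signature hypothesis and completeness and separability of both structures, all of which are in force. With these points in place the proof is short.
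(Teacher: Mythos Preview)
Your proof is correct and follows essentially the same route as the paper: the chain $(1)\Rightarrow(2)\Rightarrow(3)\Rightarrow(4)\Rightarrow(1)$, with the forward steps coming from the inclusions of logics (Theorem \ref{thm:SameLogic} for $(2)\Rightarrow(3)$) and the closing step $(4)\Rightarrow(1)$ coming from the $\IL^C$ Scott sentence of Theorem \ref{thm:ScottIsomorphism}. Your additional remarks about the two possible meanings of $\equiv$ are a nice clarification but not needed for the argument, since the paper's convention is equality of sentence-values throughout.
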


\begin{proof}
It is clear that (1) implies (2).  By Theorem \ref{thm:SameLogic} each $\IL^C(\rho)$ formula can be implemented as an $\IL$ formula, so (2) implies (3).  Similarly, each $\IL^C$ formula is an $\IL^C(\rho)$ formula, so (3) implies (4).  Finally, if $M \equiv_{\IL^C} N$ then, in particular, $N$ satisfies $M$'s Scott sentence, and both are complete separable metric structures, so $M \cong N$ by Theorem \ref{thm:ScottIsomorphism}.
\end{proof}

The formula creation rules for $\IL$ imply that if $\phi(\vec{x})$ is an $\IL$-formula in a signature with a constant symbol $c$, then the expression obtained by replacing each instance of $c$ by a new variable $y$ is an $\IL$-formula $\psi(\vec{x}, y)$.  In particular, the usual identification of formulas with sentences in a language with new constant symbols can be used in $\IL$.  By contrast, when this procedure is performed on an $\IL^C$ or $\IL^C(\rho)$ formula, the result is not necessarily again an $\IL^C$ or $\IL^C(\rho)$ formula, because it may not have the appropriate continuity property.  Scott's isomorphism theorem provides a plentiful supply of examples.  

\begin{ex}\label{ex:Substitute}
Let $M$ be a complete separable connected metric structure such that $\Aut(M)$ does not act transitively on $M$.  Pick any $a \in M$, and let $\mc{O}_a$ be the $\op{Aut}(M)$-orbit of $a$.  The fact that $\Aut(M)$ does not act transitively implies $\mc{O}_a \neq M$.  Let $\theta_a(x)$ be the $\IL$ formula obtained by replacing $a$ by a new variable $x$ in the $\IL^C$ Scott sentence of $(M, a)$.  Then for any $b \in X$,
\begin{align*}
\theta_a^M(b) &= \begin{cases}0 & \text{ if $(M, a) \cong (M, b)$} \\ 1 & \text{ otherwise}\end{cases} \\
&= \begin{cases}0 & \text{ if $b \in \mc{O}_a$,} \\ 1 &\text{otherwise.}\end{cases}
\end{align*}
Since $M$ is connected and the image of $\theta_a^M$ is $\{0, 1\}$, the function $\theta_a^M$ is not continuous.  Therefore $\theta_a$ is not an $\IL^{C}$ or $\IL^C(\rho)$ formula.
\end{ex}

The fact that the formula $\theta_a$ in the above example \emph{is} an $\IL$ formula will be relevant in the proof of Theorem \ref{Infinitary:Scott:thmScottDefinability} below.

\section{Definability in $\IL$}\label{sec:Definability}
The original use of Scott's isomorphism theorem in \cite{Scott1965} was to prove a definability theorem.  We obtain an analogous definability theorem for the metric logic $\IL$.

\begin{thm}\label{Infinitary:Scott:thmScottDefinability}
Let $M$ be a separable complete metric structure in a countable signature.  For any continuous function $P : M^n \to [0, 1]$, the following are equivalent:
\begin{enumerate}
\item{
There is an $\IL$ formula $\phi(\vec{x})$ such that for all $\vec{a} \in M^n$, $\phi^M(\vec{a}) = P(\vec{a})$,
}
\item{
$P$ is fixed by all automorphisms of $M$ (in the sense that for all $\Phi \in \op{Aut}(M)$, $P = P \circ \Phi$).
}
\end{enumerate}
\end{thm}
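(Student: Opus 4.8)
The plan is to prove the two directions separately. For the direction $(1) \Rightarrow (2)$: suppose $\phi(\vec{x})$ is an $\IL$ formula with $\phi^M = P$, and let $\Phi \in \Aut(M)$. A routine induction on the complexity of $\IL$ formulas shows that every automorphism preserves the values of formulas, i.e.\ $\phi^M(\Phi \vec{a}) = \phi^M(\vec{a})$ for all $\vec{a}$; the base cases ($d(t_1,t_2)$ and $P_j(\vec{t})$) use that $\Phi$ is an isometry fixing the distinguished predicates and commuting with the interpretations of function and constant symbols, the connective and quantifier cases are immediate, and the new infinitary cases $\sup_n$ and $\inf_n$ follow since $\Phi$ preserves each $\phi_n$. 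Hence $P = P \circ \Phi$.

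The substance is the direction $(2) \Rightarrow (1)$. Fix a countable dense subset $D = \{d_0, d_1, \dots\} \subseteq M$. The idea is to build $\phi(\vec{x})$ as an infinitary expression that, given $\vec{a}$, searches over tuples $\vec{y}$ from $D$ approximating $\vec{a}$ and reads off $P(\vec{y})$ using Scott sentences to pin down the orbit of $\vec{y}$. Concretely, for each tuple $\vec{c} = (d_{i_1}, \dots, d_{i_n})$ from $D$, Theorem \ref{thm:ScottIsomorphism} applied to the expanded structure $(M, \vec{c})$ (in the countable signature with $n$ new constants) yields an $\IL^C$ sentence $\sigma_{\vec{c}}$ that is a Scott sentence for $(M, \vec{c})$; following Example \ref{ex:Substitute}, replace the new constants by the variables $\vec{x}$ to obtain an $\IL$ formula $\theta_{\vec{c}}(\vec{x})$ with
\[
\theta_{\vec{c}}^M(\vec{b}) = \begin{cases} 0 & \text{if } (M, \vec{b}) \cong (M, \vec{c}), \\ 1 & \text{otherwise}. \end{cases}
\]
By automorphism invariance of $P$, whenever $\theta_{\vec{c}}^M(\vec{b}) = 0$ we have $P(\vec{b}) = P(\vec{c})$. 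The remaining task is to combine these $\theta_{\vec{c}}$ into a formula computing $P(\vec{a})$ for arbitrary $\vec{a} \in M^n$, not just $\vec{a} \in D^n$, using continuity of $P$ to pass from dense approximations to the true value. A natural candidate is something like
\[
\phi(\vec{x}) = \inf_{k} \; \inf_{\vec{c} \in D^n} \max\!\left\{ R_k\big(d(\vec{x}, \vec{c})\big),\; \min\{ \theta_{\vec{c}}(\vec{x}) + P(\vec{c}),\, 1\},\; |{\cdot}| \right\},
\]
where one arranges, as in Remark \ref{rem:Formulas} and Proposition \ref{prop:Borel}, that $\theta_{\vec{c}}(\vec{x})$ being nonzero forces the bracket to $1$ while $d(\vec{x},\vec{c})$ being large (relative to scale $1/k$) also forces it up, so that the infimum effectively ranges over $\vec{c} \in D^n$ very close to $\vec{x}$ and in the orbit-type actually realized near $\vec{x}$, returning values $P(\vec{c})$ that converge to $P(\vec{x})$ by continuity. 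Making this combination genuinely equal to $P(\vec{x})$ — in particular handling the interaction between the "distance $< 1/k$" cutoff, the countably many $\vec{c}$, and the $\{0,1\}$-valued $\theta_{\vec{c}}$, and verifying the result is an honest $\IL$ formula with finitely many free variables — is the delicate bookkeeping; one may instead prefer the cleaner route of fixing, for each rational $\epsilon > 0$, a formula approximating $P$ to within $\epsilon$ and then taking an infinitary combination over $\epsilon \to 0$.

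The main obstacle I expect is the passage from $D^n$ to $M^n$: the formulas $\theta_{\vec{c}}$ only directly detect exact isomorphism of expansions, so to evaluate $P$ at a non-dense point $\vec{a}$ one must genuinely exploit the continuity of $P$ together with density of $D$, and simultaneously ensure the infinitary operations used to glue everything together stay within the syntax of $\IL$ (countable $\sup$/$\inf$, continuous connectives, finitely many free variables). Checking that the candidate formula is well-formed and that its value at every $\vec{a}$ — including points whose orbit is not "isolated" among nearby orbits — really equals $P(\vec{a})$ is where the care is needed; everything else is either the straightforward invariance induction or a direct appeal to Theorem \ref{thm:ScottIsomorphism} and Example \ref{ex:Substitute}.
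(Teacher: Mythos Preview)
Your framework is right --- Scott sentences, the formulas $\theta_{\vec{c}}$, density of $D$, continuity of $P$ --- and the ``cleaner route'' you gesture at (one formula per rational $\epsilon$, then combine) is exactly what the paper does. But your concrete candidate has a genuine gap, not just bookkeeping. You evaluate $\theta_{\vec{c}}(\vec{x})$ at the free variable $\vec{x}$ itself, so the only $\vec{c} \in D^n$ that contribute are those in the \emph{same $\Aut(M)$-orbit} as $\vec{x}$. There is no reason the orbit of an arbitrary $\vec{a} \in M^n$ meets $D^n$ at all (e.g.\ if $\Aut(M)$ is trivial and $\vec{a} \notin D^n$), in which case every $\theta_{\vec{c}}(\vec{a}) = 1$ and your formula returns $1$ regardless of $P(\vec{a})$. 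A secondary issue: writing $d(\vec{x},\vec{c})$ for $\vec{c} \in D^n$ already smuggles parameters into the formula.

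The missing idea, which the paper supplies, is to insert a genuine quantifier $\inf_{\vec{y}}$ and evaluate $\theta_{\vec{a}}(\vec{y})$ at the quantified $\vec{y}$ rather than at $\vec{x}$. Concretely, for each rational $\epsilon$ set
\[
\sigma_\epsilon(\vec{x}) = \inf_{\vec{y}} \max\Bigl\{ d(\vec{x},\vec{y}),\ \inf_{\substack{\vec{a}\in D^n \\ P(\vec{a}) < \epsilon}} \theta_{\vec{a}}(\vec{y}) \Bigr\}.
\]
Now $\vec{y}$ ranges over all of $M^n$, so to witness $\sigma_\epsilon(\vec{b}) = 0$ when $P(\vec{b}) < \epsilon$ one simply takes $\vec{y} = \vec{a} \in D^n$ close to $\vec{b}$ with $P(\vec{a}) < \epsilon$, and then $\theta_{\vec{a}}(\vec{a}) = 0$ trivially --- no need for $\vec{b}$ itself to lie in any $D^n$-orbit. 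Conversely $\sigma_\epsilon(\vec{b}) = 0$ forces $P(\vec{b}) \le \epsilon$ via continuity. One then packages the $\sigma_\epsilon$ into a single formula by the $\{0,1\}$-trick and an infimum over $\epsilon$. The decoupling of ``close to $\vec{x}$'' (handled by $d(\vec{x},\vec{y})$) from ``in a $D^n$-orbit'' (handled by $\theta_{\vec{a}}(\vec{y})$) is precisely what your candidate collapses, and it is the step that makes the argument go through.
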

\begin{proof}
The proof that (1) implies (2) is a routine induction on the complexity of formulas, so we only prove that (2) implies (1).

Fix a countable dense subset $D \subseteq M$.  For each $\vec{a} \in D$, let $\theta_{\vec{a}}(\vec{x})$ be the formula obtained by replacing each occurrence of $\vec{a}$ in the Scott sentence of $(M, \vec{a})$ by a tuple of new variables $\vec{x}$.  The Scott sentence is obtained from Theorem \ref{thm:ScottIsomorphism}.  Observe that this formula has the following property, for all $\vec{b} \in M^n$:
\[\theta_{\vec{a}}^M(\vec{b}) = \begin{cases}0 & \text{ if there is $\Phi \in \Aut(M)$ with $\Phi(\vec{b}) = \vec{a}$} \\ 1 & \text{otherwise}\end{cases}\]
For each $\epsilon > 0$, define:
\[\sigma_\epsilon(\vec{x}) = \inf_{\vec{y}} \max\set{d(\vec{x}, \vec{y}), \inf_{\substack{\vec{a} \in D^n \\ P(\vec{a}) < \epsilon}}\theta_{\vec{a}}(\vec{y})}.\]
Each $\sigma_\epsilon(\vec{x})$ is a formula of $\IL(S)$.
\begin{claim}\label{Infinitary:Scott:DefinabilityClaim}
Consider any $\epsilon \in \mathbb{Q} \cap (0, 1)$ and any $\vec{b} \in M^n$.
\begin{enumerate}
\item[(a)]{
If $M \models \sigma_\epsilon(\vec{b})$, then $P(\vec{b}) \leq \epsilon$.
}
\item[(b)]{
If $P(\vec{b}) < \epsilon$, then $M \models \sigma_\epsilon(\vec{b})$.
}
\end{enumerate}
\end{claim}
\begin{proof}
\begin{enumerate}
\item[(a)]{
Suppose that $M \models \sigma_\epsilon(\vec{b})$.  Fix $\epsilon' > 0$, and pick $0 < \delta < 1$ such that if $d(\vec{b}, \vec{y}) < \delta$, then $\abs{P(\vec{b}) - P(\vec{y})} < \epsilon'$.  This exists because we assumed that $P$ is continuous.  Now from the definition of $M \models \sigma_\epsilon(\vec{b})$ we can find $\vec{y} \in M^n$ such that 
\[\max\set{d(\vec{b}, \vec{y}), \inf_{\substack{\vec{a} \in D^n \\ P(\vec{a}) < \epsilon}}\theta_{\vec{a}}(\vec{y})} < \delta.\]
In particular, we have that $d(\vec{b}, \vec{y}) < \delta$, so $\abs{P(\vec{b}) - P(\vec{y})} < \epsilon'$.
 On the other hand, $\inf_{\substack{\vec{a} \in D^n \\ P(\vec{a}) < \epsilon}}\theta_{\vec{a}}(\vec{y}) < \delta$, and 
 $\theta_{\vec{a}}(\vec{y}) \in \{0, 1\}$ for all $\vec{a} \in D^n$, so in fact there is $\vec{a} \in D^n$ 
 with $P(\vec{a}) < \epsilon$ and $\theta_{\vec{a}}(\vec{y}) = 0$.  For such an $\vec{a}$ there is an automorphism of $M$ taking $\vec{y}$ to $\vec{a}$, and hence by (2) we have that $P(\vec{y}) < \epsilon$ as well.  Combining what we have,
\begin{align*}
P(\vec{b}) &= \abs{P(\vec{b})} \\
 &\leq \abs{P(\vec{b}) - P(\vec{y})} + \abs{P(\vec{y})} \\
 &< \epsilon' + \epsilon
\end{align*}
Taking $\epsilon' \to 0$ we conclude $P(\vec{b}) \leq \epsilon$.
}
\item[(b)]{
Suppose that $P(\vec{b}) < \epsilon$, and again fix $\epsilon' > 0$.  Using the continuity of $P$, find $\delta$ sufficiently small so that if $d(\vec{b}, \vec{y}) < \delta$ then $P(\vec{y}) < \epsilon$.  The set $D$ is dense in $M$, so we can find $\vec{y} \in D^n$ such that $d(\vec{b}, \vec{y}) < \min\{\delta, \epsilon'\}$.  Then $P(\vec{y}) < \epsilon$, so choosing $\vec{a} = \vec{y}$ we have
\[\inf_{\substack{\vec{a} \in D^n \\ P(\vec{a}) < \epsilon}}\theta_{\vec{a}}(\vec{y}) = 0.\]
Therefore
\[\max\set{d(\vec{b}, \vec{y}), \inf_{\substack{\vec{a} \in D^n \\ P(\vec{a}) < \epsilon}}\theta_{\vec{a}}(\vec{y})} = d(\vec{b}, \vec{y}) < \epsilon',\]
and so taking $\epsilon' \to 0$ shows that $M \models \sigma_{\epsilon}(\vec{b})$.
}
\end{enumerate}
\renewcommand{\qedsymbol}{$\dashv$ - Claim \ref{Infinitary:Scott:DefinabilityClaim}}
\end{proof}
Consider now any $\vec{a} \in M^n$.  By (a) of the claim $P(\vec{a})$ is a lower bound for $\set{\epsilon \in \mathbb{Q} \cap (0, 1) : M\models \sigma_\epsilon(\vec{a})}$.  If $\alpha$ is another lower bound, and $\alpha > P(\vec{a})$, then there is $\epsilon \in \mathbb{Q} \cap (0, 1)$ such that $P(\vec{a}) < \epsilon < \alpha$.  By (b) of the claim we have $M \models \sigma_\epsilon(\vec{a})$ for this $\epsilon$, contradicting the choice of $\alpha$.  Therefore
\[P(\vec{a}) = \inf\set{\epsilon \in \mathbb{Q} \cap (0, 1) : M \models \sigma_\epsilon(\vec{a})}.\] 

Now for each $\epsilon \in \mathbb{Q} \cap (0, 1)$, define a formula
\[\psi_\epsilon(\vec{x}) = \max\set{\epsilon, \sup_{m \in \mathbb{N}}\min\set{m\sigma_\epsilon(\vec{x}), 1}}.\]
Then for any $\vec{a} \in M^n$,
\[\psi_\epsilon^M(\vec{a}) = \begin{cases}\epsilon &\text{if $\sigma_\epsilon^M(\vec{a}) = 0$,} \\ 1 &\text{otherwise.}\end{cases}\]
Let $\phi(\vec{x}) = \inf_{\epsilon \in \mathbb{Q} \cap (0, 1)}\psi_\epsilon(\vec{x})$.  Then
\[\phi^M(\vec{a}) = \inf\set{\epsilon : \sigma_\epsilon^M(\vec{a}) = 0} = P(\vec{a}).\]
\end{proof}

We also have a version where parameters are allowed in the definitions:

\begin{cor}\label{Infinitary:Scott:corScottParameters}
Let $M$ be a separable complete metric structure in a countable signature, and fix a set $A \subseteq M$.  For any continuous function $P : M^n \to [0, 1]$, the following are equivalent:
\begin{enumerate}
\item{
There is an $\IL$ formula $\phi(\vec{x})$ with parameters from $A$ such that for all $\vec{a} \in M^n$, 
\[\phi^M(\vec{a}) = P(\vec{a}),\]
}
\item{
$P$ is fixed by all automorphisms of $M$ that fix $A$ pointwise,
}
\item{
$P$ is fixed by all automorphisms of $M$ that fix $\overline{A}$ pointwise.
}
\end{enumerate}
\end{cor}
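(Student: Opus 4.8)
The plan is to reduce Corollary~\ref{Infinitary:Scott:corScottParameters} to Theorem~\ref{Infinitary:Scott:thmScottDefinability} by a change-of-signature argument, together with a density observation to pass between $A$ and $\overline A$. First I would dispose of the equivalence of (2) and (3). The implication (3)$\implies$(2) is immediate since any automorphism fixing $A$ pointwise is continuous, hence fixes $\overline A$ pointwise as well; for (2)$\implies$(3) I would argue the contrapositive, noting that an automorphism $\Phi$ fixing $A$ pointwise but moving some point of $\overline A$ would already have to move a point of $A$ by continuity of $\Phi$ and $\Phi^{-1}$ together with density—so in fact the two classes of automorphisms coincide, and there is nothing to prove beyond this observation.

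Next I would handle the parametrized version of the definability statement itself. The natural move is to expand the signature: since $M$ is separable and $A$ need not be countable, I would first replace $A$ by a countable dense subset $A_0 \subseteq \overline A$ (taking $A_0$ dense in the separable metric subspace $\overline A$); by the paragraph above, the automorphisms fixing $A_0$ pointwise are exactly those fixing $\overline A = \overline{A_0}$ pointwise, which by the first step are exactly those fixing $A$ pointwise. Now let $S'$ be the signature $S$ together with a new constant symbol $c_b$ for each $b \in A_0$, and let $M'$ be the expansion of $M$ interpreting $c_b$ as $b$. Then $S'$ is still countable, $M'$ is still a separable complete metric structure, and $\op{Aut}(M')$ is precisely the group of automorphisms of $M$ fixing every element of $A_0$, i.e.\ fixing $\overline A$ pointwise. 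Applying Theorem~\ref{Infinitary:Scott:thmScottDefinability} to $M'$ and the continuous function $P$, condition (3) becomes equivalent to the existence of an $\IL(S')$ formula $\phi'(\vec x)$ with $\phi'^{M'}(\vec a) = P(\vec a)$ for all $\vec a \in M^n$.

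It then remains to translate between $\IL(S')$ formulas and $\IL(S)$ formulas with parameters from $A_0$ (hence from $A$). In one direction, given $\phi'(\vec x)$ in $\IL(S')$, only finitely many—or at worst countably many—of the new constants $c_b$ actually occur; I would substitute the corresponding parameters $b \in A_0 \subseteq A$ back in to obtain an $\IL(S)$ formula with parameters from $A$ computing the same function on $M$. (Here I should be slightly careful: a single $\IL$ formula, being built from countable conjunctions and disjunctions, mentions only countably many constant symbols, so this substitution is legitimate.) Conversely, an $\IL(S)$ formula with parameters $\vec b$ from $A \subseteq \overline A = \overline{A_0}$ is not literally an $\IL(S')$ formula unless the $b_i$ lie in $A_0$; but any $\IL(S)$ formula with parameters from $A$ defines an automorphism-invariant continuous function when $A$ is fixed pointwise, so this direction is really just (1)$\implies$(2), which is the routine induction on formula complexity already granted in the proof of Theorem~\ref{Infinitary:Scott:thmScottDefinability}. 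Assembling these pieces gives (1)$\implies$(2)$\iff$(3)$\implies$(1). The main obstacle I anticipate is bookkeeping around the cardinality of $A$: one must be sure to reduce to a \emph{countable} parameter set before invoking Theorem~\ref{Infinitary:Scott:thmScottDefinability} (whose hypothesis requires a countable signature), and then to check that passing from $A$ to a countable dense $A_0 \subseteq \overline A$ does not change the relevant automorphism group—this is exactly where the density argument of the first step does the work.
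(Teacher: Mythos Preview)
Your approach is essentially the paper's: reduce to a countable parameter set so the expanded signature stays countable, observe that the automorphism groups for $A$, $\overline{A}$, and the countable dense set all coincide, and then invoke Theorem~\ref{Infinitary:Scott:thmScottDefinability} on the expansion. One small slip to fix: you first choose $A_0 \subseteq \overline{A}$ but later write ``$b \in A_0 \subseteq A$''; since (1) demands parameters from $A$, you should take $A_0$ to be a countable dense subset of $A$ itself (possible because subspaces of separable metric spaces are separable), which still gives $\overline{A_0} = \overline{A}$---this is exactly what the paper does with its set $D \subseteq A$.
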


\begin{proof}
Since $M$ is a separable metric space, there is a countable set $D \subseteq A$ such that $\overline{D} = \overline{A}$ in $M$.  An automorphism of $M$ fixes $A$ pointwise if and only if it fixes $D$ pointwise if and only if it fixes $\overline{D} = \overline{A}$ pointwise, which establishes the equivalence of (2) and (3).
For the equivalence of (1) and (2), apply Theorem \ref{Infinitary:Scott:thmScottDefinability} to the structure obtained from $M$ by adding a new constant symbol for each element of $D$.
\end{proof}

Theorem \ref{Infinitary:Scott:thmScottDefinability} does not hold, as stated, with $\IL$ replaced by $\IL^C$ or $\IL^C(\rho)$, because we assumed only continuity for the function $P$, while formulas in $\IL^C$ always define uniformly continuous functions.  Even if $P$ is assumed to be uniformly continuous, some intermediate steps in our proof use the formulas discussed in Example \ref{ex:Substitute}, as well as other possibly discontinuous formulas, and hence our argument does not directly apply to give a version of Scott's definability theorem in the other infinitary logics.

\begin{q}\label{q:DefinabilityInILC}
Let $M$ be a separable complete metric structure, and let $P : M^n \to [0, 1]$ be uniformly continuous and automorphism invariant.  Is $P$ definable in $M$ by an $\IL^C$-formula?  Is $P$ definable in $M$ by an $\IL^C(\rho)$-formula?
\end{q}

To conclude, we give one quite simple example of definability in $\IL$ where first-order definability fails.

\begin{ex}\label{ex:CStar}
Recall that a (unital) C*-algebra is a unital Banach algebra with an involution $*$ satisfying the C*-identity $\norm{xx^*} = \norm{x}^2$.  A formalization for treating C*-algebras as metric structures is presented in \cite{Farah2014b}, where it is also shown that in an appropriate language the class of C*-algebras is $\forall$-axiomatizable in continuous first-order logic.  The model theory of C*-algebras has since become an active area of investigation.

A \defined{trace} on a C*-algebra $A$ is a bounded linear functional $\tau : A \to \mathbb{C}$ such that $\tau(1) = 1$, and for all $a, b \in A$, $\tau(a^*a) \geq 0$ and $\tau(ab)=\tau(ba)$.   An appropriate way to consider traces as $[0, 1]$-valued predicates on the metric structure associated to a C*-algebra is given in \cite{Farah2014b}.  Traces appear as important tools throughout the C*-algebra literature.  In the first-order continuous model theory of C*-algebras, traces play a key role in showing that certain important C*-algebras can be constructed as Fra\"iss\'e limits \cite{Eagle2014b}, and traces are also related to the failure of quantifier elimination for most finite-dimensional C*-algebras \cite{Eagle2015}.  Several other uses of traces in the model theory of C*-algebras can be found in \cite{Farah}.  Of particular interest is the case where a C*-algebra has a unique trace; such algebras are called \emph{monotracial}. 

In general, traces on C*-algebras need not be automorphism invariant.  For an example, consider $C(2^{\omega})$, the C*-algebra of continuous complex-valued functions on the Cantor space.  Pick any $z \in 2^{\omega}$, and define $\tau : C(2^{\omega}) \to \mathbb{C}$ by $\tau(f) = f(z)$.  It is straightforward to verify that $\tau$ is a trace.  For any other $z' \in 2^{\omega}$ there is an autohomeomorphism $\phi$ of $2^{\omega}$ sending $z$ to $z'$.  The map $\Phi : f \mapsto f \circ \phi$ is then an automorphism of $C(2^{\omega})$, and we have $(\tau \circ \Phi)(f) = \tau(f \circ \phi) = (f \circ \phi)(z) = f(z')$, so $\tau \circ \Phi \neq \tau$.

On the other hand, it is easily seen that if $\tau$ is a trace on $A$ and $\Phi \in \op{Aut}(A)$, then $\tau \circ \Phi$ is again a trace on $A$.  Thus for monotracial C*-algebras the unique trace \emph{is} automorphism invariant.  The following is therefore a direct consequence of Theorem \ref{Infinitary:Scott:thmScottDefinability}:

\begin{cor}\label{cor:CStar}
If $A$ is a separable C*-algebra with a unique trace $\tau$, then $\tau$ is $\IL$-definable (without parameters) in $A$.
\end{cor}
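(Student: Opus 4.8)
The plan is to deduce this directly from Theorem \ref{Infinitary:Scott:thmScottDefinability}, so the work consists of verifying that the unique trace satisfies the hypotheses of that theorem. First I would recall the setup from \cite{Farah2014b}: a separable unital C*-algebra $A$ is treated as a metric structure whose underlying space is (a bounded portion of) $A$, and a trace $\tau : A \to \mathbb{C}$ is encoded as a $[0,1]$-valued predicate in a uniform way — say by recording the real and imaginary parts of $\tau$ restricted to the operator-norm unit ball, suitably rescaled into $[0,1]$. Under this encoding $\tau$ becomes a continuous (indeed, on each ball, Lipschitz via $\abs{\tau(a)-\tau(b)} \le \norm{a-b}$) function $P : A^n \to [0,1]$ for the appropriate arity $n$; thus the continuity hypothesis of Theorem \ref{Infinitary:Scott:thmScottDefinability} is met.

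Second, I would check automorphism invariance. The key algebraic fact is that if $\Phi \in \op{Aut}(A)$ and $\tau$ is a trace on $A$, then $\tau \circ \Phi$ is again a trace: linearity and boundedness are immediate, $(\tau\circ\Phi)(1) = \tau(\Phi(1)) = \tau(1) = 1$ since $\Phi$ is unital, $(\tau\circ\Phi)(a^*a) = \tau(\Phi(a)^*\Phi(a)) \ge 0$ since $\Phi$ is a $*$-homomorphism, and the trace identity $(\tau\circ\Phi)(ab) = \tau(\Phi(a)\Phi(b)) = \tau(\Phi(b)\Phi(a)) = (\tau\circ\Phi)(ba)$ follows from $\tau$ being tracial. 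Since $A$ is assumed monotracial, $\tau$ is the unique trace, so necessarily $\tau\circ\Phi = \tau$. Translated through the encoding, this says precisely that the predicate $P$ satisfies $P = P\circ\Phi$ for every $\Phi \in \op{Aut}(A)$.

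Having verified both hypotheses, I would invoke Theorem \ref{Infinitary:Scott:thmScottDefinability}: since $A$ is a separable complete metric structure in a countable signature (the signature of \cite{Farah2014b} is countable), there is an $\IL$ formula $\phi(\vec{x})$ with $\phi^A(\vec{a}) = P(\vec{a})$ for all $\vec{a}$, and this $\phi$ uses no parameters. Unwinding the encoding of traces back into honest complex values recovers $\tau$ from the $\IL$-definable predicate $P$, so $\tau$ is $\IL$-definable without parameters in $A$. The only genuinely non-routine point — and the one I would take some care over — is making the passage between the complex-valued functional $\tau$ and its $[0,1]$-valued predicate encoding precise enough that "$\IL$-definable" has an unambiguous meaning; but since that encoding is exactly the one fixed in \cite{Farah2014b} and continuity/invariance are preserved by it, this amounts to citing the established formalism rather than proving anything new.
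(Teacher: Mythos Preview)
Your proposal is correct and follows essentially the same approach as the paper: the paper observes that $\tau\circ\Phi$ is again a trace for any $\Phi\in\op{Aut}(A)$, so monotraciality forces $\tau\circ\Phi=\tau$, and then invokes Theorem \ref{Infinitary:Scott:thmScottDefinability} directly. Your write-up simply spells out in more detail the verification that $\tau\circ\Phi$ is a trace and that the encoding of $\tau$ as a $[0,1]$-valued predicate (per \cite{Farah2014b}) is continuous, which the paper leaves implicit or mentions only in passing.
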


It is natural to ask whether $\IL$-definability in Corollary \ref{cor:CStar} can be replaced by definability in a weaker logic.  Monotracial C*-algebras satisfying certain additional properties do have their traces definable in first-order continuous logic (see \cite{Farah}), but the additional assumptions on the C*-algebras are necessary.  In \cite{Farah} it is shown that the separable monotracial C*-algebra constructed by Robert in \cite[Theorem 1.4]{Robert2013} has the property that the trace is not definable in first-order continuous logic.

The situation for definability in $\IL^C$ is less clear.  Any trace on a C*-algebra is $1$-Lipschitz, and so in particular is uniformly continuous.  An interesting special case of Question \ref{q:DefinabilityInILC} is then whether or not the trace on a monotracial separable C*-algebra is always $\IL^C$-definable.
\end{ex}
\bibliographystyle{amsalpha}
\bibliography{ExpressivePower}
\end{document}